\theoremstyle{plain}
\newtheorem{theorem}{Theorem}[section]
\newtheorem*{namedtheorem}{\theoremname}
\newcommand{\theoremname}{testing}
\theoremstyle{definition}
\begin{document}

\title{What is an Almost Normal Surface?}

\author{Joel Hass}
\address{Department of Mathematics, University of California, Davis, California 95616}
\email{hass@math.ucdavis.edu}
\thanks{Partially supported by NSF grants DMS-0072348 and CCFÐ1117663}

\subjclass{Primary 57N10; Secondary 53A10}
\date{July 1, 2012.}

\dedicatory{This paper is dedicated to Hyam Rubinstein on the occasion of his 60th birthday.}

\keywords{Almost normal surface, minimal surface, 3-sphere recognition}

\begin{abstract}
 A major breakthrough in the theory of topological algorithms occurred in 1992 when Hyam Rubinstein introduced the idea of an almost normal surface.
We explain how almost normal surfaces emerged naturally from the  study of  geodesics and   minimal surfaces. Patterns of stable and unstable geodesics can be used to  characterize the 2-sphere among surfaces, and similar patterns of normal and almost normal surfaces led Rubinstein to an algorithm for recognizing the 3-sphere.
\end{abstract}

\maketitle

\section{Normal Surfaces and Algorithms}

There is a long history of  interaction between low-dimensional topology and the theory of algorithms.
In 1910 Dehn posed the problem of finding an algorithm to 
recognize the unknot \cite{Dehn:10}. Dehn's approach was 
to check whether the fundamental group of the complement of the knot, for which 
a finite presentation can easily be computed, is infinite cyclic. This led Dehn to pose some of the first decision problems in
group theory, including asking for an algorithm 
to decide if a finitely presented group is infinite cyclic. It was shown about fifty years later that 
general group theory decision problems of this type are not decidable \cite{Stillwell}. 

Normal surfaces were introduced by Kneser  as a tool to describe and enumerate surfaces in a triangulated 3-manifold \cite{Knes:29}.  While a general surface inside a 3-dimensional manifold $M$ can be floppy, and have fingers and filligrees that wander around the manifold, the structure of a normal surface is locally restricted.  When viewed from within a single tetrahedron, normal surfaces look much like flat planes.  As with flat planes, they cross tetrahedra in collections of  triangles and quadrilaterals.  Each tetrahedron has seven types of elementary disks of this type;  four types of triangles and three types of quadrilaterals. The whole manifold has $7t$ elementary disk types, where $t$ is the number of 3-simplices in a triangulation.

\begin{figure}[htbp] 
   \centering 
 \includegraphics[width=2in]{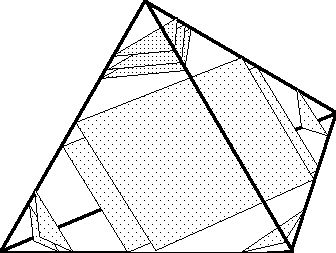} 
   \caption{A normal surface intersects a 3-simplex in triangles and quadrilaterals.}
   \label{normal}
\end{figure}

Kneser realized that the local rigidity of normal surfaces leads to finiteness results, and through them to the
Prime Decomposition Theorem for a 3-manifold. This theorem states that a 3-manifold can be cut open along finitely many  2-spheres into pieces that
are irreducible, after which the manifold cannot be cut further in a non-trivial way.  The idea behind this theorem is intuitively quite simple: if a very large number of disjoint surfaces are all uniformly flat, then some pair of the surfaces must be parallel.

A further advance came in the work of Haken, who gave the first algorithm for the unknotting problem  \cite{Hak:61}. 
Haken realized that a normal surface could be described by a vector with $7t$ integer entries,  with each entry describing the number of elementary disks of a given type.  Furthermore the matching of these disks across faces of a triangulation leads to a collection of integer linear equations, and this allows application of the techniques of integer linear programming.  In many
important cases, the search for surface that gives a solution to a topological  problem can be reduced to a search among a finite collection of
candidate surfaces, corresponding to a Hilbert Basis for the space of solutions to the equations \cite{HLP}.  Problems that can be solved algorithmically by this approach include:

\begin{tabbing}
{\em Problem:} UNKNOTTING\\
{\em INSTANCE:} A triangulated compact 3-dimensional manifold $M$ and a collection of edges $K$\\ in the 1-skeleton of $M$\\
{\em QUESTION:} \=  Does $K$ bound an embedded disk? \\
\end{tabbing} 

\begin{tabbing}
{\em Problem:} GENUS\\
{\em INSTANCE:} A triangulated compact 3-dimensional manifold $M$ and a collection of edges $K$\\ in the 1-skeleton of $M$ and an integer $g$\\
{\em QUESTION:} \=  Does $K$ bound an embedded surface of genus $g$? \\
\end{tabbing}

\begin{tabbing}
{\em Problem:} SPLITTING\\
{\em INSTANCE:} A triangulated compact  3-dimensional manifold $M$ and a collection of edges $K$\\ in the 1-skeleton of $M$\\
{\em QUESTION:} \=  Does $K$ have distinct components separated by an embedded sphere? \\
\end{tabbing}

But one major problem remained elusive.

\begin{tabbing}
{\em Problem:} 3-SPHERE RECOGNITION\\
{\em INSTANCE:} A triangulated 3-dimensional manifold $M$\\
{\em QUESTION:} \=  Is $M$ homeomorphic to the 3-sphere? \\
\end{tabbing}

Given Perelman's solution of the 3-dimensional
Poincare Conjecture \cite{Perelman}, we know that 3-Sphere Recognition is equivalent to the following.

\begin{tabbing}
{\em Problem:} SIMPLY CONNECTED 3-MANIFOLD\\
{\em INSTANCE:} A triangulated compact 3-dimensional manifold $M$\\
{\em QUESTION:} \=  Is  $M$ simply connected?
\end{tabbing}

The 3-Sphere recognition problem has important  consequences.
Note for example that the problem of deciding whether a given 4-dimensional simplicial complex has underlying space which is a manifold reduces to verifying that the link of each vertex is a 3-sphere, and thus to 3-Sphere Recognition.

In dimension two, the corresponding recognition problem is very easy.
Determining if a surface is homeomorphic to a 2-sphere can be solved by computing  its Euler characteristic. 
In contrast, for dimensions five and higher there is no algorithm to determine if a manifold is homeomorphic to  a sphere \cite{VolodinKuznetsovFomenko}, and
the status of the 4-sphere recognition problem remains open \cite{Nabutovsky}.
The related problem of fundamental group triviality is not decidable
 in manifolds of dimension four or higher. Until Rubinstein's work, there was no successful 
 approach to the triviality problem that took advantage of the special nature of 3-manifold groups.

For 3-sphere recognition one needs some computable way to characterize the 3-sphere. 
Unfortunately all 3-manifolds have zero Euler characteristic, and
no known easily computed invariant that can distinguish the 3-sphere among manifolds of dimension three.
Approaches  developed to characterize spheres in higher dimensions were based on simplifying some description, typically a Morse function.
The simplification process of a Morse function in dimension three, as given by a Heegaard splitting,
gets bogged down in complications.  Many attempts at 3-sphere recognition, if successful, 
imply combinatorial proofs of the Poincare Conjecture. Such combinatorial proofs have still not  been found.
A breakthrough occurred in the Spring of 1992, at a workshop at the Technion in Haifa, Israel.
Hyam Rubinstein presented a characterization of the 3-sphere that was suitable to algorithmic analysis.  In a series of talks at this
workshop he introduced a new algorithm that takes a triangulated 3-manifold and determines whether it is a 3-sphere.
The key new concept was an {\em almost normal surface}.

\section{What is an almost normal surface?}

Almost normal surfaces, as with their normal relatives, intersect each 3-simplex in $M$ in a collection of  triangles or quadrilaterals, with one exception.
In a single 3-simplex the intersection with the almost normal surface consists of either an an octagon or a pair of normal disks connected by a tube, as shown in Figure ~\ref{almostnormal}.

\begin{figure}[htbp] 
   \centering
 \includegraphics[width=2in]{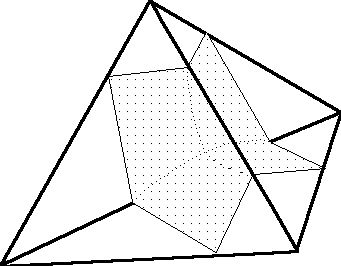} 
 \includegraphics[width=2in]{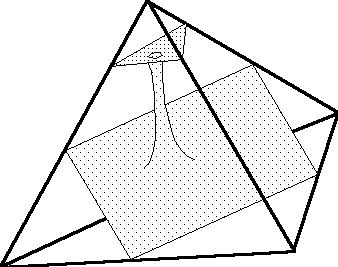} 
   \caption{Almost normal surfaces intersect  one 3-simplex in an octagon, or two normal disks tubed together.}
   \label{almostnormal}
\end{figure}

Rubinstein argued that an almost normal 2-sphere  had to occur in any triangulation of a 3-sphere, and in fact that the search for the 
presence or absence of this almost normal 2-sphere could be used to build an algorithm to recognize the 3-sphere.  Shortly afterwards, Abigail Thompson combined Rubinstein's ideas with techniques from the theory of thin position of knots, and gave an alternate approach to proving that Rubinstein's algorithm was valid \cite{Thompson}.  The question we address here is the geometrical background that motivated Rubinstein's breakthrough.

To describe the ideas from which almost normal surfaces emerged, we take a diversion into differential geometry and some results in the theory of geodesics and minimal surfaces.
A classical problem asks which surfaces contain closed, embedded (or simple) geodesics. The problem is hardest for a 2-sphere, since for other surfaces  a shortest  closed curve that is not homotopic to a point gives an embedded geodesic.
A series of results going back to Poincare establishes that every 2-sphere contains a simple closed geodesic \cite{ Croke, Grayson, HassScott, Poincare, Klingenberg}.
In fact any 2-sphere always contains no less than three simple, closed and {\em unstable} geodesics.  
Unstable means that while each sufficiently short arc of the geodesic minimizes length among curves connecting its endpoints, the entire curve can be pushed to either side in a manner that decreases length. 
The classic example is an equator of a round sphere, for which a sub-arc of length shorter than
$\pi$ is length minimizing, whereas longer arcs can be shortened by a deformation, as can the whole curve.
In Figure~\ref{unstable} we show several differently shaped 2-spheres and indicate unstable geodesics on each of them.

\begin{figure}[htbp] 
   \centering
\includegraphics[width=4in]{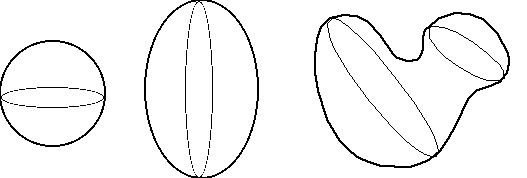} 
   \caption{Some unstable geodesics on 2-spheres of various shapes}
   \label{unstable}
\end{figure}

A conceptually simple argument shows that  unstable geodesics exist for any Riemannian metric on a 2-sphere, using a minimax argument that goes back at least to Birkhoff \cite{Birkhoff}.  Starting with a very short curve, drag it over the 2-sphere until it shrinks
to a point on the other side.  Among all such families of curves, look at the family whose longest curve is as short as possible.  This minimax curve provides an unstable geodesic.  It is not hard to show such a curve exists.

 Surfaces other then the 2-sphere do not necessarily contain an unstable geodesic.  The torus has a flat metric and higher genus surfaces have hyperbolic metrics, and in these metrics there are no unstable geodesics.  Even the projective plane, the  closest geometric relative of the 2-sphere, has no unstable geodesics in its elliptic metric.  Therefore the property of always having an unstable geodesic, for any metric, characterizes the 2-sphere.  

We will need to refine this to develop an algorithm. Any surface has some metrics in which there are both stable and unstable geodesics.
So given any fixed Riemannian metric on a surface, we focus on a maximal collection of disjoint 
separating geodesics, both stable and unstable.  See Figure~\ref{patterns}, where 
unstable geodesics are drawn as solid curves and  stable geodesics as dashed curves.
We assume a ``generic'' metric on a surface, in which there are only finitely many disjoint geodesics.   Almost all metrics have this
property, which can be achieved by a small perturbation of any metric  \cite{White}. 

\begin{figure}[htbp] 
   \centering
 \includegraphics[width=2.5in]{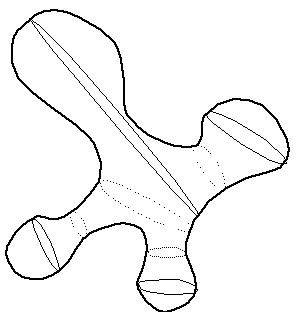} 
  \includegraphics[width=2.5in]{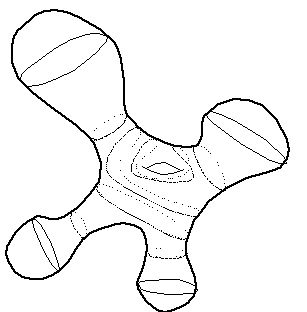} 
   \caption{Maximal collections of disjoint separating geodesics on a 2-sphere and a torus. Stable geodesics are shown with broken curves.}
   \label{patterns}
\end{figure}

In these examples we see certain patterns among a maximal collection of disjoint geodesics on a 2-sphere. These are summarized in the following result.

\begin{theorem}\label{geod.patterns}
Let $F$ be an orientable surface with a generic metric and ${\mathcal G}$ 
a maximal collection of disjoint, simple, closed and separating geodesics on $F$. Then ${\mathcal G}$  has the following properties.
\end{theorem}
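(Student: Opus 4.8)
The plan is to read off the combinatorics of $\mathcal{G}$ from the variational meaning of the two kinds of geodesics, applying a Birkhoff-style sweepout inside each complementary region. First I would record the global structure. Since every curve in $\mathcal{G}$ is separating, cutting $F$ along $\mathcal{G}$ produces regions $R_1,\dots,R_k$, and I form the dual graph $\Gamma$, with a vertex for each region and an edge for each geodesic. Because a separating geodesic has its two sides in distinct regions and its removal disconnects $F$, every edge of $\Gamma$ is a bridge; as $F$ is connected, this forces $\Gamma$ to be a tree. I would then translate the hypothesis that $\mathcal{G}$ is \emph{maximal} into the statement that no complementary region contains a simple closed separating geodesic in its interior, and the hypothesis that the metric is \emph{generic} into the statement that every geodesic in $\mathcal{G}$ is nondegenerate, hence is either \emph{stable} (length strictly increases under the normal push to either side) or \emph{unstable} (length strictly decreases under the normal push to either side).

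The heart of the argument is a single lemma: if a region $R$ admits a sweepout by curves whose length is strictly larger in the interior of the sweep than at its two ends, then the minimax argument described above (Birkhoff curve shortening) produces a simple closed geodesic in the interior of $R$, contradicting maximality. I would apply this in two settings. For an innermost region that is a disk $R$ bounded by a single geodesic $\gamma$ — the leaves of $\Gamma$ on a $2$-sphere, where every region is planar — I sweep $\gamma$ across $R$ down to a point, so that length falls to $0$; were $\gamma$ stable, length would first increase and then return to $0$, producing an interior maximum and hence a forbidden interior geodesic. Thus every innermost disk is bounded by an unstable geodesic, which recovers the existence of unstable geodesics on any $2$-sphere. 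For an annular region $R$ bounded by $\gamma_1$ and $\gamma_2$, sweeping across $R$ shows that $\gamma_1,\gamma_2$ cannot both be stable (a common interior minimum forces an interior geodesic) nor both unstable (a common interior maximum does likewise); hence stable and unstable geodesics alternate across every annulus. Assembling these facts over the tree $\Gamma$ yields the asserted nesting pattern.

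The main obstacle is making the minimax step honest: I must guarantee that the geodesic extracted at an interior extremum is \emph{embedded}, \emph{nondegenerate}, and genuinely lies in the open region $R$, rather than degenerating onto a boundary curve or collapsing to a point. Here I would rely on curve shortening, which preserves embeddedness and disjointness from the fixed boundary geodesics, together with the genericity of the metric to exclude degenerate limits and to keep distinct geodesics isolated; the positive-length interior-extremum structure of the sweep prevents collapse to a point or escape to $\partial R$. A secondary technical point is extending alternation from annular regions to regions with three or more boundary geodesics (pairs of pants and beyond on higher-genus $F$). There I would not argue each region in isolation but instead root $\Gamma$ arbitrarily and induct inward from the leaves, so that each region inherits the stability types already determined for its neighbours.
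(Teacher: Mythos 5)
Your setup (tree-shaped dual graph, maximality forbidding interior separating geodesics, genericity giving nondegeneracy) and your minimax lemma match the spirit of the paper's proof, which runs everything through the Gage--Hamilton--Grayson curve shortening flow. Your treatment of disks on the sphere and of annuli is essentially the paper's: pushing an unstable boundary geodesic inward and shortening shows such a region is a disk or an annulus with one stable and one unstable boundary, and a sweepout between two stable boundary curves of an annulus would produce an interior unstable geodesic, violating maximality. (One slip there: with both boundaries \emph{stable}, the sweepout has an interior \emph{maximum}, producing an unstable interior geodesic; with both unstable, an interior \emph{minimum}, producing a stable one. You have the two cases reversed, though the contradiction survives either way.)

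The genuine gap is that your argument stops at disks and annuli, while the theorem also classifies the remaining regions, and your proposed remedy --- rooting the tree and inducting inward from the leaves --- cannot close it. Knowing the stability types of a region's boundary geodesics from its neighbours tells you nothing about the topology of the region itself, so it cannot show that a region with a single stable boundary geodesic is a punctured torus, that a region with three boundary geodesics is a pair of pants with all three boundaries stable, or that no region has four or more boundary geodesics. The paper needs two further ideas here that your proposal is missing. First, for a region bounded by one stable geodesic: maximality forces every essential, non-boundary-parallel separating curve to be absent (it could be homotoped/shortened to an interior separating geodesic), so all essential non-boundary-parallel curves are non-separating, which pins the region down as a punctured torus; this is a topological argument, not a sweepout. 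Second, and crucially, the \emph{tubing} argument: given two stable boundary geodesics of one region, join them by a shortest arc in the region, take the boundary of a neighborhood of this tube as a new separating curve, and shorten it; it must converge to a stable geodesic, which by maximality is a \emph{third} boundary component. This simultaneously shows the region is a pair of pants and caps the number of boundary geodesics at three. Without this construction, items on pairs of pants and on the nonexistence of regions with four or more boundary curves remain unproven, and no induction over the dual tree substitutes for it.
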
 
\begin{itemize}
\item If $F$ is a 2-sphere then ${\mathcal G}$  contains an unstable geodesic.  
\item A region  in $F-{\mathcal G}$ whose boundary is a single unstable geodesic is a disk.
\item A region  in $F-{\mathcal G}$ whose boundary is a single stable geodesic is a punctured torus.
\item A region in $F-{\mathcal G}$ with two boundary geodesics is an annulus whose boundary consists of one stable and one unstable geodesic.
\item A region in $F-{\mathcal G}$ with three boundary geodesics is a ``pair of pants''  whose boundary consists of  three stable geodesics.
\item No region of $F-{\mathcal G}$ has four or more boundary geodesics.
\end{itemize}
\begin{proof}
The proof applies minimax arguments using the curvature flow techniques developed by Gage, Hamilton, and Grayson \cite{Grayson2}.
The curvature flow deforms a curve on a smooth Riemannian surface in the direction of its curvature vector. Applying this flow to a family of curves gives a continuous deformation of the entire family, and decreases the length of each of curve, limiting to a point or a geodesic \cite{Grayson}.

If a region has an unstable geodesic on its boundary, then this boundary curve can be pushed in slightly and then shrunk by the curvature flow until it converges to a stable geodesic or to a point. Thus each region with an unstable geodesic on its boundary is either a disk or an annulus bounded by one stable and one unstable geodesic. The boundary curve of a complementary disk region must be unstable, since shrinking a stable boundary geodesic to a point gives a family of curves in the disk whose minimax curve is an unstable geodesic in the interior of the disk. But complementary regions contain no interior geodesics.

A region bounded by a single stable geodesic cannot contain a separating essential curve that is not boundary parallel, since such a curve could be homotoped to a separating geodesic in the interior of the region. Thus all essential, non-boundary parallel simple closed curves in the region are non-separating.  Such a curve must exist since the region is not a disk, and so the region must be a punctured torus.

A minimax argument shows that an annular region bounded by two stable geodesics has an unstable geodesic separating its two boundary geodesics.  The maximality of ${\mathcal G}$ rules out this configuration.

If a region has two non-homotopic stable geodesics on its boundary, then we can find a new closed  separating curve by tubing the two boundary geodesics along a shortest arc connecting them within the region.  This new curve can be shortened within the region till it converges to a third stable geodesic, which must be a third boundary component.  Thus the region is a pair of pants and has exactly three stable geodesics on its boundary.  It follows that  no region has more than three boundary geodesics.
\end{proof}

These patterns can be used to distinguish the 2-sphere from other surfaces.  Fix any generic metric on a surface $F$ and let ${\mathcal G}$ be a maximal family of separating, simple, disjoint geodesics. 
 
\begin{theorem}[Geometric 2-Sphere Characterization]
  $F$ is a 2-sphere if $\mathcal G$ satisfies the following conditions:
  \begin{itemize}
\item  There is at least one unstable geodesic in  $\mathcal G$.   
\item No complementary region of $ F- \mathcal G$  has boundary consisting of a single stable geodesic.
 \end{itemize}
\end{theorem}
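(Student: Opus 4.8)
The plan is to combine the structural classification of complementary regions from Theorem~\ref{geod.patterns} with an Euler characteristic count, organized around the dual graph of the decomposition $F-\mathcal{G}$. Throughout I take $F$ to be closed and orientable, as in Theorem~\ref{geod.patterns}.

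First I would observe that the first hypothesis forces $\mathcal{G}\neq\emptyset$, since an unstable geodesic in $\mathcal{G}$ is in particular an element of $\mathcal{G}$. Hence cutting $F$ along $\mathcal{G}$ produces a nonempty family of complementary regions, each with at least one boundary geodesic. I would then apply Theorem~\ref{geod.patterns} region by region. A region with a single boundary geodesic is a disk (unstable boundary) or a punctured torus (stable boundary); the second hypothesis excludes the punctured torus, so every one-boundary region is a disk. A region with two boundary geodesics is an annulus, a region with three is a pair of pants, and no region has four or more. Thus under both hypotheses the only regions that occur are disks, annuli, and pairs of pants, with Euler characteristics $1$, $0$, and $-1$ respectively.

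Next I would pass to the dual graph $\Gamma$, placing a vertex at each region and an edge across each geodesic of $\mathcal{G}$ joining the two regions on its two sides. Because each geodesic is separating, cutting $F$ along it disconnects the surface; since the geodesics of $\mathcal{G}$ are pairwise disjoint, any other edge has both its regions on the same side, so deleting the given edge disconnects $\Gamma$. Every edge is therefore a bridge, so $\Gamma$ is a forest, and it is connected because $F$ is, hence a tree. The degree of a vertex equals the number of boundary geodesics of its region, so disk, annulus, and pair-of-pants vertices have degrees $1$, $2$, $3$. Writing $d$, $a$, $p$ for their numbers, the tree relation $V=E+1$ together with the handshake identity $d+2a+3p=2E$ yields $p=d-2$, that is, $d-p=2$.

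Finally, since the geodesic circles have zero Euler characteristic, Euler characteristic is additive over the decomposition, giving $\chi(F)=d\cdot 1+a\cdot 0+p\cdot(-1)=d-p=2$. As $F$ is closed and orientable, $\chi(F)=2$ forces $F$ to be a $2$-sphere. I expect the main obstacle to be the structural input rather than this count: the real content is already packaged in Theorem~\ref{geod.patterns}, and the point to verify carefully is that the two hypotheses eliminate exactly the regions that carry genus. Indeed, had punctured-torus regions been permitted, the same computation would give $\chi(F)=2-2t$, where $t$ is their number, so the genus of $F$ equals the number of punctured tori; the second hypothesis is precisely what sets $t=0$, while the first rules out the degenerate case $\mathcal{G}=\emptyset$ (for instance a flat torus) in which no such region decomposition exists.
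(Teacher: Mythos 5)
Your proof is correct, and it reaches the conclusion by a route that is structurally the same as the paper's but differs in execution---in a way that actually fills a gap in the paper's own argument. Both proofs reduce $F$ to a union of disks, annuli and pairs of pants glued along a tree pattern. The paper, however, argues dynamically: it starts at the unstable geodesic, pushes it to either side, and uses the curvature flow to walk outward region by region, re-deriving the classification of the complementary pieces as it goes, and then simply asserts that the resulting union of disks, annuli and pairs of pants ``form a 2-sphere.'' You instead invoke Theorem~\ref{geod.patterns} once, as a black box for the classification, and close the argument combinatorially: since each geodesic separates, every edge of the dual graph is a bridge, so the graph is a tree, and $V=E+1$ together with the handshake identity $d+2a+3p=2E$ gives $\chi(F)=d-p=2$. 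That Euler-characteristic count is precisely the justification the paper omits for its final assertion, so your write-up is more complete on this point. Your bookkeeping also clarifies how the hypotheses enter: in the paper the unstable geodesic seeds the traversal, whereas for you the first hypothesis serves only to force $\mathcal{G}\neq\emptyset$ (your flat-torus example shows some such hypothesis is needed); indeed your relation $d=p+2\geq 2$ shows that, given nonemptiness and the second hypothesis, disk regions---hence unstable geodesics---must exist, so the first hypothesis is close to redundant. Finally, your remark that admitting punctured-torus regions yields $\chi(F)=2-2t$ upgrades the characterization to a genus formula, a sharpening the paper does not state.
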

\begin{proof}
Suppose that $F$ satisfies these two conditions.  Pushing the unstable geodesic to either side decreases its length.  Continuing to decrease length with the curvature flow, we arrive either at a stable geodesic or a point.  If we arrive at a point then the unstable geodesic bounds a disk on that side. If we arrive at a stable geodesic then we consider the region on its other side.  If this region has only one boundary component  then the surface is not a sphere since it contains a punctured torus. If the region has one other unstable boundary curve then it is an annulus.
If the region has more than two stable boundary curves, then it's a pair of pants with three stable boundary geodesics.  Continuing across the new boundary geodesics, we construct a surface from pieces whose dual graph forms a tree.  Unless we encounter a complementary region of $ F- \mathcal G$  whose boundary has exactly one stable geodesic, the surface $F$ is a union of annuli, pairs of pants and disks, and these form a 2-sphere.  
\end{proof}

A very similar characterization carries over to dimension three and forms the basis of Rubinstein's 3-sphere recognition algorithm.  We first address the restriction of the curves we considered above  to separating curves.
One can distinguish separating and non-separating curves on a surface with homology, and homology can be efficiently computed from the simplicial structure of a triangulated manifold. Thus in searching for the 3-sphere we can immediately rule out any manifold that does not have the same homology as the 3-sphere. In a homology 3-sphere, every surface separates.  In dimension two, homology itself is enough to characterize the 2-sphere, though we did not take advantage of this in our construction. In dimension three, homology computations alone do not characterize the 3-sphere, but do reduce the candidates to the class of homology 3-spheres.  So we can assume that we are working in this class and that all surfaces are separating.  In particular we can rule out the possibility that $M$ contains a non-separating sphere or an embedded projective plane.

For a characterization of the 3-sphere we look at stable and unstable minimal surfaces instead of geodesics.  
By 1991 Rubinstein had made two important contributions to the study of such minimal surfaces
in dimension three.  Each of these two contributions played a key role in the creation of  the 3-sphere recognition algorithm.
 
Rubinstein had worked on the highly non-trivial problem of showing the existence of minimal representatives for various classes of surfaces in 3-manifolds. Simon and Smith had shown that the 3-sphere, with any Riemannian metric, contains an embedded minimal 2-sphere \cite{SimonSmith}.
This result was extended by Jost and by  Pitts and Rubinstein \cite{Jost, PittsRubinstein}. In a series of papers Pitts and Rubinstein developed a 
program which showed that a very large class of surfaces in 3-manifolds can be isotoped to be minimal. In particular, their methods indicated that a strongly irreducible Heegaard splitting in a 3-manifold always has an unstable minimal representative.
To show that a 3-sphere, with any Riemannian metric, contains an unstable minimal 2-sphere, start with a tiny 2-sphere and drag it over the 3-sphere until it shrinks down to a point on the other side.  Among all such families look for the biggest area 2-sphere in the family and choose a family that makes this area as small as possible. This minimax construction gives an unstable minimal 2-sphere.  The existence proof is more subtle than for a geodesic, but the concepts are similar, and the method extends to give the following insight.  Suppose we take a stable minimal 2-sphere in a 3-sphere and shrink it to a point, after necessarily first enlarging its area. Then among all such families of 2-spheres there is one whose largest area sphere has smallest area. This minimax 2-sphere is an unstable minimal 2-sphere.

The methods of Pitts-Rubinstein  can be used to characterize the 3-ball, similarly to the first two conditions of Theorem~\ref{geod.patterns}. The theory is considerably harder since there is no simple surface flow available to decrease area, unlike the curvature flow for curves in dimension two. Moreover spheres can split into several components as their area decreases, unlike curves. However these difficulties can be overcome  \cite{PittsRubinstein, Jost, SimonSmith}.

Suppose $B$  is a 3-manifold:\\ \\
{\bf Geometric 3-Ball Characterization:}  \\
{\em $B$ is a 3-ball if it satisfies the following conditions}
\begin{itemize}
\item  The boundary of  $B$  is a stable minimal 2-sphere.
\item  The interior of  $B$  contains no stable minimal 2-sphere.
\item The interior of  $B$  contains an unstable minimal 2-sphere.
 \end{itemize}

The idea of such a 3-Ball Characterization follows the lines of the two-dimensional case.  Suppose that $B$ satisfies the three assumptions.
Then $B$ contains an unstable minimal 2-sphere in its interior.  Shrinking this 2-sphere to one side must move it to $\partial B$, as otherwise it would get stuck on some stable minimal 2-sphere in the interior of $B$.  Similarly, shrinking this 2-sphere to the other side must collapse it to a point, or again it would get stuck on a stable minimal 2-sphere in the interior of $B$.  Thus $B$ is swept out by embedded spheres and homeomorphic to a ball.
 
A similar result characterizes the 3-sphere. Let ${\mathcal S}$ be a maximal family of separating disjoint embedded minimal spheres in $M$, both stable and unstable.  We are assuming that $M$ is a homology sphere, so all surfaces separate.

For a generic metric on a 3-manifold $M$, the collection of disjoint minimal spheres ${\mathcal S}$ is finite.  If $M$ contains infinitely many disjoint minimal spheres, then they can be used to partition $M$ into infinitely many components.  In each component one can find an embedded stable minimal sphere by applying the method of Meeks-Simon-Yau \cite{MeeksSimonYau}. But stable minimal spheres in $M$ satisfy uniform bounds on their second fundamental form \cite[Theorem 3] {Schoen}, implying a lower bound to the volume between two such spheres unless they are parallel (meaning that each projects homeomorphically to the other under the nearest point projection).  An infinite sequence of parallel minimal 2-spheres has a subsequence converging to a minimal 2-sphere with a Jacobi Field. But a theorem of White gives the absence of Jacobi fields for a minimal surface in a generic metric \cite{White}.  
\\ \\
{\bf Geometric 3-Sphere Characterization:}  \\
{\em $M$ is a 3-sphere if and only if  no complementary region of $ M - \mathcal S$  has boundary consisting entirely of stable minimal 2-spheres.}
 
 \begin{proof}
First note that $M$ is homeomorphic to a 3-sphere if and only if every complementary component $X$ of  $ M - \mathcal S$ is a punctured ball.

Suppose that $X$ is a complementary component of $ M - \mathcal S$   and consider the case where $X$ has an unstable minimal 2-sphere $\Sigma$  among its boundary components.
Then we can push   $\Sigma$   in slightly and apply the theorem of Meeks-Simon and Yau to minimize in its isotopy class \cite{MeeksSimonYau}. This gives a collection of
stable minimal 2-spheres, that, when  joined by tubes, recover the isotopy class of $\Sigma$. We conclude that $X$ is a punctured ball with exactly one unstable boundary component.

Now suppose that $X$ has all its boundary components stable.  We will show by contradiction that $X$ is not a punctured ball.  If it were, then 
it could be swept out by a family of 2-spheres.  This family begins with a 2-sphere that tubes together all the boundary 2-spheres of $X$ and
ends at a point.  By the methods of Simon and Smith \cite{SimonSmith}, see also  \cite{Jost, PittsRubinstein}, we obtain an unstable minimal 2-sphere in the interior of $X$.  But this contradicts maximality of $\mathcal S$, so $X$ cannot be a punctured ball.

Together, these cases give the desired characterization.
 \end{proof}
 
To translate the geometric characterization into an algorithm, we need a corresponding combinatorial theory that characterizes the 3-sphere among triangulated 3-manifolds.   
We need to replace the ideas of Riemannian 
geometry with PL versions that capture the relevant ideas.  Fortunately, natural PL-approximations to length and area
exist in dimensions two and three.  
Length is approximated by the {\em weight}, which measures how many times a curve crosses the edges of a triangulation, and area by
how many times a surface intersects edges. 
Combinatorial length and area can be related to Riemannian area by taking a series of metrics whose limit has support on the 1-skeleton. 

For curves on a surface, the analog of a geodesic then becomes a  special type  of {\em  normal curve}.  A normal curve intersects each two-simplex in arcs joining distinct edges of the two-simplex, so that no arc doubles back and has both endpoints on the same edge. A stable PL-geodesic is defined to be  a normal curve for which any deformation increases weight.  For deformations we allow isotopies of the curve in the surface which are non-transverse to edges or vertices at finitely many times. An unstable PL-geodesic is a normal curve that admits a weight decreasing deformation to each of its two sides.  Note that not all normal curves are PL-geodesics. In the triangulation of the 2-sphere given by a tetrahedron, there are three unstable PL-geodesics given by quadrilaterals, and additional unstable PL-geodesics of weight eight and above.  A curve of weight three surrounding a vertex is a normal curve, but not a PL-geodesic. See Figure~\ref{normalcurves}.

\begin{figure}[htbp] 
   \centering
 \includegraphics[width=2.in]{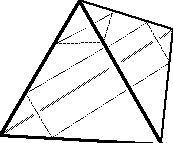} 
   \caption{A length four normal curve forms an unstable PL-geodesic.}
   \label{normalcurves}
\end{figure}

The analogous combinatorial area for surfaces in triangulated 3-manifolds theory was investigated in a series of papers by Jaco and Rubinstein. In their work on PL-minimal surfaces,  Jaco and Rubinstein showed that many of the properties that made minimal surfaces so useful in studying 3-manifolds still held when using combinatorial area \cite{JacoRubinstein}. 
For surfaces in 3-manifolds and deformations of these surfaces that avoid vertices, normal surfaces play the role of stable minimal surfaces. The question of which surfaces take the role of unstable minimal surfaces in the combinatorial theory was unclear until Rubinstein's insight that almost normal surfaces fill this role.  Just as unstable geodesics can be pushed to either side so as to decrease length, and unstable minimal surfaces can be pushed to either side to decrease area, so almost normal surfaces can be pushed to either side so as to decrease weight, or combinatorial area.

These two ingredients, the existence of unstable minimal surfaces and the construction of combinatorial versions of stable and unstable minimal surfaces, combine to give an algorithm to recognize the 3-sphere. The characterization of a 3-sphere via its minimal surfaces can be turned into a characterization via properties of  piecewise linear surfaces, properties that can be determined by constructing and examining a finite collection of normal and almost normal surfaces. 

\section{Recognizing the 3-sphere}
 
Rubinstein's algorithm is essentially the PL version of the geometric 3-sphere characterization given above. The characterization of the 3-sphere among triangulated homology 3-spheres begins by computing a maximal family ${\mathcal S}$ of disjoint normal and almost normal  spheres in a candidate manifold $M$.\\ \\
{\bf 3-Sphere Characterization:}  \\
{\em $M$ is a 3-sphere if $\mathcal S$ satisfies the following conditions:}
\begin{itemize}
\item  There is at least one almost normal sphere in  $\mathcal S$.   
\item No complementary region of $ M - \mathcal S$  has boundary consisting of a single normal sphere, other than a neighborhood of a vertex.
 \end{itemize}
These conditions can be checked by a finite procedure, and so give an algorithm.

The algorithm for recognizing the 3-sphere proceeds as follows.
One begins with a collection of 3-simplices and instructions for identifying their faces in pairs.
\begin{itemize}
\item Check that $M$ is a 3-manifold by verifying that the link of each vertex is a 2-sphere.
\item Verify that $M$ has the homology of a 3-sphere.  In particular, this implies that each 2-sphere in $M$ is separating.
\item Compute a maximal collection of disjoint  normal and almost normal 2-spheres in $M$. This can be done by solving the normal surface equations
and finding normal 2-spheres and almost normal 2-spheres among the fundamental solutions.  This follows Haken and reduces the search for such a family to a search within a Hilbert basis of solutions to the integer linear equations arising from normal surfaces \cite{Hak:61}.
\item  Cut open the manifold along a maximal collection of disjoint normal 2-spheres and examine each component in turn.
An easy topological argument  tells us that $M$ is homeomorphic to a 3-sphere if and only if every component is homeomorphic to a punctured 3-ball.
\item Components with two or more normal boundary 2-spheres are homeomorphic to punctured 3-balls.
\item Components with a single normal 2-sphere on their boundary are homeomorphic to a 3-ball if and only if they contain an almost normal 2-sphere or are neighborhoods (stars) of a vertex.
\item  $M$ is a 3-sphere if every component with a single normal 2-sphere on its boundary contain an almost normal 2-sphere or is a vertex neighborhood. 
 \end{itemize}
 
The structure of the algorithm is very similar to the 2-sphere characterization
described above. The characterization of the various complementary regions is also similar to that in dimension two. The evolution of a curve
by curvature is replaced by a normalization procedure in which a surface deforms to become normal or almost normal.  Thompson saw here that a correspondence between almost normal spheres and bridge 2-spheres in thin position allows the techniques of thin position to be used to establish the existence of almost normal spheres containing an octagonal disk \cite{Thompson}. \\ \\
{\bf Remark.}  There are differences between the characterizations used in the smooth and PL settings.  In the smooth setting, a region bounded entirely by stable minimal 2-spheres and containing no minimal 2-spheres in its interior cannot be a punctured ball.  An unstable minimal 2-sphere always exists in its interior.  In contrast, a region in a triangulated 3-manifold bounded entirely by normal 2-spheres and containing no normal 2-spheres in its interior is always a punctured ball.

 \section{Conclusion}

Rubenstein's  work on the existence of minimal surfaces in 3-manifolds and on PL-minimal surface theory naturally led him to the concept of an almost normal surface.  Almost normal surfaces are now widely recognized as  powerful tools  to apply in multiple areas of 3-manifold theory. 

Table~\ref{dictionary} summarizes some correspondences between the worlds of Riemannian manifolds  with their minimal submanifolds and of triangulated manifolds with their normal and almost normal submanifolds.

\begin{table} [htbp] 
\caption{Minimal Surface - Normal Surface Correspondences}
\label{dictionary}     
\begin{tabular}{| l| l |}
 \hline 
 Smooth Riemannian Manifolds & Combinatorial Triangulated Manifolds   \\
 \hline
  \hline
Geodesic    & Normal curve  \\
  \hline
  Length or Area & Weight  \\
  \hline
Stable minimal surface  & Normal surface \\
  \hline
Unstable minimal surface  & Almost normal surface\\
  \hline
Flow by mean curvature  & Normalization\\
  \hline
A smooth $S^3$ contains an unstable minimal $S^2$ &  A  PL $S^3$ contains an almost normal $S^2$  \\
  \hline
$  \partial X  $ a stable $S^2 $  and  int($X$) contains  & $ \partial  X   $  a normal $S^2 $   and   int($X$) contains \\
an unstable $S^2,$ no stable  $S^2$ &  an almost normal  $S^2$, no normal  $S^2$   \\
$ \implies  X= B^3$  & $\implies  X= B^3$  \\
\hline
\end{tabular}
\end{table}

 \end{document}